\documentclass[11pt]{article}

\usepackage[margin=1in]{geometry}
\usepackage[T1]{fontenc}
\usepackage[utf8]{inputenc}
\usepackage{lmodern}
\usepackage{amsmath,amssymb,amsthm,mathtools}
\usepackage{enumitem}
\usepackage{hyperref}
\usepackage{microtype}

\hypersetup{
  colorlinks=true,
  linkcolor=blue,
  citecolor=blue,
  urlcolor=blue
}

\numberwithin{equation}{section}

\newtheorem{theorem}{Theorem}[section]
\newtheorem{lemma}[theorem]{Lemma}

\newtheorem{definition}[theorem]{Definition}
\newtheorem{assumption}[theorem]{Assumption}
\newtheorem{remark}[theorem]{Remark}

\newcommand{\dd}{\,\mathrm{d}}
\newcommand{\eps}{\varepsilon}
\newcommand{\R}{\mathbb{R}}
\newcommand{\E}{\mathbb{E}}
\newcommand{\Prob}{\mathbb{P}}
\newcommand{\X}{\mathcal{X}}
\newcommand{\Xo}{\mathcal{X}^{0}}
\newcommand{\cS}{\mathcal{S}}
\newcommand{\cP}{\mathcal{P}}

\def\({\left(}\def\){\right)}

\title{Strong uniqueness and large deviation principle for\\ mutually catalytic super Markov chains \thanks{This work was supported by
		the National Key R\&D Program of China (2022YFA1006102), and
		the National Natural Science Foundation of China (12471418, 12595294, 12231002), and  the New Cornerstone Science Foundation(NCI202501).}}
\author{Jie Xiong\thanks{Department of Mathematics and Shenzhen International Center for Mathematics, Southern University of Science and Technology, Shenzhen 518055, China. Email: \texttt{xiongj@sustech.edu.cn}.}
\and Wen Xu\thanks{School of Mathematical Sciences, Peking University, Beijing 100871, China. Email: \texttt{xuwen@math.pku.edu.cn}.}}

\date{}

\begin{document}
\maketitle

\begin{abstract}
In this paper, we study the strong uniqueness problem for the mutually catalytic super-Markov chain, which is a two-dimensional degenerate stochastic differential equation with H\"older continuous coefficients.  The key step is to find a process which is a function of two coupled processes and satisfies an autonomous one-dimensional stochastic differential equation; uniqueness for this equation follows from a Yamada--Watanabe argument.  A large deviation principle is then established, in the irreducible two-state case, by applying the weak-convergence approach of Budhiraja, Dupuis and Maroulas to the controlled equations.
\end{abstract}

\noindent\textbf{Keywords:}  mutually catalytic branching; strong uniqueness; stochastic differential equation; large deviation principle.\\
\textbf{AMS subject classification:} Primary 60F10; secondary 60H10, 60J68.

\section{Introduction}

Super-processes are measured-valued processes arising from high density limit of branching particle systems. We refer the reader to Dawson \cite{Daw} and Watanabe \cite{Wat} for the origins of these processes. Independently, Konno and Shiga \cite{KS} and Reimers \cite{Rei} proved that the superprocess $\mu_t$ admits a density, i.e. $\mu_t(\dd x)=v_t(x)\dd x$, where $v_t(x)$ is the unique weak solution to the stochastic partial differential equation
\begin{equation}\label{eq:sbm-spde}
 \partial_t v_t(x)=\frac12\Delta v_t(x)+\sqrt{\lambda v_t(x)}\,\dot B_{t,x},\quad v_0=\frac{\mu_0(\dd x)}{\dd x}.
\end{equation}
Here $\dot B_{t,x}$ denotes space-time white noise. The strong uniqueness problem for \eqref{eq:sbm-spde} has attracted considerable attention
(cf. Mytnik et al. \cite{MPS}, Mytnik and Perkins \cite{MP}, Burdzy et al. \cite{BMP}, Mueller et al. \cite{MMP}, and Xiong \cite{Xio}).

When the branching rate depends on the concentration of another super-Brownian population, the density process $v_t$ satisfies
\begin{equation}\label{eq:catalyst-reactant}
 \partial_t v_t(x)=\frac12\Delta v_t(x)+\sqrt{u_t(x)v_t(x)}\,\dot B_{t,x},\quad v_0=\mu,
\end{equation}
where $u_t(x)$ is the density of another super-Brownian motion independent of the noise $\dot B_{t,x}$.  In this setting, $u_t(x)$ is called the catalyst and $v_t(x)$ the reactant.  The resulting process is known as a super-Brownian motion with a super-Brownian catalyst; see Dawson and Fleischmann \cite{DF}.

A further study is the mutually catalytic branching model
\begin{equation}\label{eq:mutual-spde}
\begin{cases}
 \partial_t v_t(x)=\frac12\Delta v_t(x)+\lambda_1\sqrt{u_t(x)v_t(x)}\,\dot B_{t,x},\\[2mm]
 \partial_t u_t(x)=\frac12\Delta u_t(x)+\lambda_2\sqrt{u_t(x)v_t(x)}\,\dot W_{t,x},
\end{cases}
\end{equation}
where $\dot W_{t,x}$ is independent of $\dot B_{t,x}$.  In the symmetric case $\lambda_1=\lambda_2$, Dawson and Perkins \cite{DP} established weak uniqueness using self-duality.  

A natural extension is the cyclically catalytic model
\begin{equation}\label{eq:cyclic-spde}
 \partial_t u_t^k(x)=\frac12\Delta u_t^k(x)+\sqrt{u_t^k(x)u_t^{k+1}(x)}\,\dot W_{t,x}^k,
 \quad k=1,2,\ldots,K,
\end{equation}
where $u_t^{K+1}=u_t^1$.  Although weak uniqueness remains open for this model, Fleischmann and Xiong \cite{FX} constructed a strong Markov solution through a Markov selection argument and studied its long-time segregation and extinction-survival properties.

This motivates the uniqueness problem studied in this paper.  More precisely, we consider the case where the underlying spatial motion is collapsed to a single point, and study uniqueness for the $K$-dimensional stochastic differential equation
\begin{equation}\label{eq:K-sde}
 \dd X_t^k=\sum_{j=1}^K q_{kj}X_t^j\dd t+\sqrt{\gamma_k X_t^kX_t^{k+1}}\,\dd W_t^k,
 \quad k=1,2,\ldots,K,
\end{equation}
where $q_{ij}\ge0$ for $i\ne j$, $\gamma_k>0$, and $W^1,\ldots,W^K$ are mutually independent standard Brownian motions.  In particular, if $(q_{ij})$ is the $Q$-matrix of a Markov chain, then \eqref{eq:K-sde} describes a super-Markov chain.

Athreya et al. \cite{ABBP} studied the SDE
\begin{equation}\label{eq:abbp-sde}
 \dd X_t^k=b_k(X_t)\dd t+\sqrt{X_t^k\gamma_k(X_t)}\,\dd W_t^k,
 \qquad k=1,2,\ldots,K.
\end{equation}
They proved weak uniqueness under assumptions that $\gamma_k(x)>0$ for all $x$, that $b_k(x)>0$ for $x\in\partial\R_+^K$, and that the $b_k(x)$ have linear growth.  These assumptions are not satisfied by \eqref{eq:K-sde}.  More general equations were studied by Bass and Perkins \cite{BP,BP2}, but \eqref{eq:K-sde} is still not covered by these extensions.  Dawson and Perkins \cite{DP2} obtained a further extension by considering
\begin{equation}\label{eq:dp2-sde}
 \dd X_t^k=b_k(X_t)\dd t+\sqrt{\gamma_k(X_t)X_t^kX_t^{k+1}}\,\dd W_t^k,
 \qquad k=1,2,\ldots,K.
\end{equation}
Under suitable H\"older continuity and boundary conditions on the coefficients, with $\gamma_k(x)$ strictly positive and locally bounded away from zero, they proved weak uniqueness for the associated martingale problem.

The weak uniqueness results above naturally lead to the stronger question of whether strong uniqueness holds for the degenerate catalytic system \eqref{eq:K-sde}.  In the case $q_{ij}=0$ for all $i\ne j$, Dawson et al. \cite{DFX} established strong uniqueness for \eqref{eq:K-sde}.  Their argument relies crucially on the special absorbing property that any component, once it reaches zero, remains at zero forever.  In the same spirit, with $\sqrt{\gamma_k X_t^kX_t^{k+1}}$ replaced by $\sqrt{X_t^k f_k(X_t)}$, the strong uniqueness problem was studied by He \cite{He}.  The goal of this article is to establish strong uniqueness for \eqref{eq:K-sde} in the two-dimensional mutually catalytic case.  The general case will be discussed in a future work. 
We point out that strong uniqueness results for multidimensional SDEs with non-Lipschitz coefficients remain limited.  Apart from the works of DeBlassie \cite{Deb} and Swart \cite{Swa2}, few results seem to be available in this direction.  In particular, DeBlassie \cite{Deb} posed the strong uniqueness problem for \eqref{eq:abbp-sde} as an open problem.

We also investigate the small-noise asymptotic behavior of the two-dimensional mutually catalytic system. Large deviation principles (LDP) for small-noise SDEs originate from the classical Freidlin--Wentzell theory; see, for example, Freidlin and Wentzell \cite{FreidlinWentzell}. Extensions to positive diffusions with coefficients degenerating at the boundary have been studied in \cite{BC}. In the present model, however, the diffusion coefficient
$
(x_1,x_2)\longmapsto \sqrt{x_1x_2}
$
is degenerate on the boundary of the positive quadrant and is only H{\"o}lder continuous, so the standard Freidlin--Wentzell theory cannot be applied directly without additional analysis of the boundary behavior. We therefore employ the weak-convergence approach developed by Budhiraja et al. \cite{BDM}. The main ingredients are the well-posedness of the associated controlled skeleton equation, the uniform separation of finite-energy skeleton paths from the boundary in the irreducible case, the compactness of the corresponding controlled trajectories, and the convergence of the controlled stochastic equations to the deterministic skeleton. This yields an explicit action functional describing the quadratic control energy required to produce a prescribed trajectory.

The main contributions are as follows. 
 Our first result is the strong uniqueness of the solution to \eqref{eq:K-sde} for $K=2$. Our second result establishes a LDP for the same system when the branching rates are multiplied by a small parameter $\eps$.

The rest of this article is organized as follows.  Section \ref{sec:main} states the main results.  Section \ref{sec:strong-uniqueness} proves strong uniqueness in the two-dimensional case.  Section \ref{sec:ldp} establishes the LDP for the mutually catalytic Markov chain with small branching rates.  Appendix \ref{app:bdm} recalls the weak-convergence framework of Budhiraja et al. \cite{BDM}.

\section{Main results}\label{sec:main}

Let $\X=[0,\infty)^2$ and $\Xo=(0,\infty)^2$.  In the two-dimensional case, \eqref{eq:K-sde} becomes
\begin{equation}\label{eq:main-sde}
 \dd X_t^k=\sum_{j=1}^2 q_{kj}X_t^j\dd t+\sqrt{\gamma_kX_t^1X_t^2}\,\dd W_t^k,
 \quad k=1,2.
\end{equation}
Here $q_{12},q_{21}\ge0$, $q_{11},q_{22}\in\R$, and $\gamma_1,\gamma_2>0$.  In the super-Markov-chain case one usually has $q_{11}=-q_{12}$ and $q_{22}=-q_{21}$. The proof below does not require these identities.

Weak existence and uniqueness in law for equation \eqref{eq:main-sde} are already available from the existing weak-uniqueness theory for catalytic branching diffusions; see, for instance, \cite{DP2}. Therefore, by the Yamada--Watanabe theorem, in order to establish strong well-posedness it suffices to prove pathwise uniqueness. 

\begin{theorem}[Pathwise uniqueness]\label{thm:pathwise-uniqueness}
Suppose that $x_0=(x_0^1,x_0^2)\in\Xo$.  Then pathwise uniqueness holds for \eqref{eq:main-sde}.
\end{theorem}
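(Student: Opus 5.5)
The plan is to prove pathwise uniqueness by localization away from the boundary $\partial\X$, followed by a Yamada--Watanabe-type argument for the part of the path near the boundary, where the diffusion coefficient $\sqrt{x_1x_2}$ is only $\tfrac12$-H\"older. The abstract suggests, and I will aim for, a one-dimensional functional of the pair of solutions that satisfies an autonomous SDE with $\tfrac12$-H\"older noise.

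First step. Let $X=(X^1,X^2)$ and $Y=(Y^1,Y^2)$ be two solutions of \eqref{eq:main-sde} on the same filtered space, with the same Brownian motions $(W^1,W^2)$ and the same initial value $x_0\in\Xo$. For $m\ge1$ put
\[
\tau_m=\inf\{t:\ X_t\notin[1/m,m]^2 \ \text{or}\ Y_t\notin[1/m,m]^2\}.
\]
On $[1/m,m]^2$ the map $x\mapsto\sqrt{\gamma_k x_1x_2}$ is Lipschitz and the drift is linear. The standard Gronwall argument with $\E|X_{t\wedge\tau_m}-Y_{t\wedge\tau_m}|^2$ therefore gives $X=Y$ on $[0,\tau_m]$. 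Letting $m\to\infty$ gives $X=Y$ up to $\tau=\lim_m\tau_m$, the first time the common path reaches $\partial\X$ or explodes. Non-explosion follows from the linear growth of the drift and a moment bound, via a first-moment estimate on $X^1+X^2$. Hence the only issue is what happens after $X$ and $Y$ first reach the boundary. If $q_{12}=q_{21}=0$ the boundary is absorbing and we are done, as in \cite{DFX}. Otherwise the zero component is pushed back into the interior by the drift, so the paths can leave the boundary and the problem is genuine.

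Second step: a scalar comparison. The differences $D^k=X^k-Y^k$ satisfy
\[
\dd D^k_t=\sum_j q_{kj}D^j_t\,\dd t+\sqrt{\gamma_k}\,h_t\,\dd W^k_t,\qquad h_t=\sqrt{X^1_tX^2_t}-\sqrt{Y^1_tY^2_t}.
\]
The crucial structural fact is that both components share the same scalar factor $h$. Using $|\sqrt a-\sqrt b|^2\le|a-b|$ and
\[
X^1X^2-Y^1Y^2=X^1D^2+Y^2D^1,
\]
one gets the local bound $h^2\le C_R(|D^1|+|D^2|)$. A naive Yamada--Watanabe argument applying $\phi_n$ to each $D^k$ separately fails. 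The reason is that $\phi_n''(D^1)\lesssim 1/(n|D^1|)$, while $h^2$ is controlled by $|D^1|+|D^2|$, not by $|D^1|$. I would therefore look for a single scalar process
\[
Z=F(X,Y)\quad\text{with}\quad \dd Z=b(Z)\,\dd t+\sigma(Z)\,\dd M,
\]
where $|\sigma(z)|^2\le C|z|$, $b$ is Lipschitz, and $Z=0$ exactly when $X=Y$. The first candidate is the difference of the products (or of the square roots of the products) $X^1X^2-Y^1Y^2$. A second candidate is a weighted difference $\gamma_1^{-1/2}D^1+\gamma_2^{-1/2}D^2$ combined with the product, so that the interaction terms from the drift $q_{12}X^2$, $q_{21}X^1$ close up.

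Having such a $Z$, I would apply the Yamada--Watanabe functions $\phi_n(Z)$ on $[\tau,\,\cdot\,]$, localized to a large ball. The $\tfrac12$-H\"older bound on $\sigma$ then gives $\E|Z_t|\le C\int_0^t\E|Z_s|\,\dd s$, hence $Z\equiv0$, hence $X\equiv Y$ globally.

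The main obstacle is exactly the construction of the autonomous scalar process $Z$, i.e.\ finding $F$ for which the coupling drift terms and the cross terms of the two independent noises combine into functions of $Z$ alone. I would first try Itô's formula on the product difference and on the ratios $X^1/X^2$, $Y^1/Y^2$, and look for an integrating factor that makes the drift linear in $Z$. If no exact autonomy is possible, the fallback is a comparison-type domination $|D^1|+|D^2|\le C|Z|$ on the localized set. That domination is what the Yamada--Watanabe step really needs.
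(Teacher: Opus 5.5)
There is a genuine gap. The step that carries all the difficulty is producing a scalar process to which a Yamada--Watanabe argument applies. You leave this as an open search, and the candidates you name do not work.

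Your Step 1 is correct, but it only settles uniqueness up to the first time the path reaches $\{x_1x_2=0\}$, which is the easy part. Your diagnosis that applying $\phi_n$ to $D^1,D^2$ separately fails is also correct. The target you then set is a function $Z=F(X,Y)$ of the \emph{pair} of solutions that vanishes exactly on $\{X=Y\}$ and solves a closed SDE. You do not construct such an $F$, and the proposed candidates do not satisfy closed equations:
\begin{itemize}
\item $\dd(X^1X^2)$ has drift $q_{21}(X^1)^2+q_{12}(X^2)^2+(q_{11}+q_{22})X^1X^2$, which is not a function of $X^1X^2$. In any case $X^1X^2=Y^1Y^2$ does not force $X=Y$.
\item The weighted difference $\gamma_1^{-1/2}D^1+\gamma_2^{-1/2}D^2$ has noise coefficient $h=\sqrt{X^1X^2}-\sqrt{Y^1Y^2}$, which is not a function of it.
\end{itemize}
The fallback domination $|D^1|+|D^2|\le C|Z|$ is asserted without any mechanism. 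As written, the proof stops exactly where the problem begins.

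The missing idea is to change coordinates for a \emph{single} solution so that the system becomes triangular. Take $P=X^1/(X^1+X^2)$ and $S=X^1+X^2$. Then $\sqrt{X^1X^2}=S\sqrt{P(1-P)}$, and It\^o's formula gives the autonomous equation
\[
\dd P_t=b(P_t)\dd t+\sqrt{P_t(1-P_t)}\bigl(\sqrt{\gamma_1}(1-P_t)\dd W^1_t-\sqrt{\gamma_2}P_t\dd W^2_t\bigr),
\]
with $b$ a polynomial; $S$ has dropped out entirely.
\begin{itemize}
\item The squared diffusion coefficients $\gamma_1p(1-p)^3$ and $\gamma_2p^3(1-p)$ are $C^1$ on $[0,1]$. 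Hence $\sum_k|\sigma_k(p)-\sigma_k(q)|^2\le L|p-q|$, and the usual one-dimensional Yamada--Watanabe argument gives $P=\widetilde P$. Having two driving Brownian motions causes no trouble.
\item Given $P$, the sum solves the linear equation $\dd S=SA(P)\dd t+S\sqrt{P(1-P)}\bigl(\sqrt{\gamma_1}\dd W^1+\sqrt{\gamma_2}\dd W^2\bigr)$ with $A$ affine. Its unique solution is an explicit stochastic exponential.
\item That exponential also shows $S_t>0$ for all $t$. So $X=(PS,(1-P)S)$ is recovered, and $X=\widetilde X$.
\end{itemize}
The boundary $\{x_1x_2=0\}$ corresponds to $p\in\{0,1\}$, where the $P$-equation is harmless, so no localization or restart at the hitting time is needed.

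You did mention trying It\^o's formula on $X^1/X^2$. That ratio does satisfy an autonomous equation while $X^2>0$. However, it is unbounded, has superlinear coefficients and breaks down at $\{X^2=0\}$. The normalization $X^1/(X^1+X^2)\in[0,1]$ is what makes the argument global.
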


We next consider the LDP for the small-noise mutually catalytic Markov chain
\begin{equation}\label{eq:small-noise-x}
 \dd X_t^{\eps,k}=\sum_{j=1}^2 q_{kj}X_t^{\eps,j}\dd t+\sqrt{\eps\gamma_k X_t^{\eps,1}X_t^{\eps,2}}\,\dd W_t^k,
 \quad k=1,2,
\end{equation}
with $X_0^\eps=x_0\in\Xo$.

\begin{theorem}[Large deviation principle]\label{thm:ldp}
Assume that $q_{12},q_{21}>0$.  Then $\{X^\eps\}_{\eps>0}$ satisfies an LDP on $C([0,T];\X)$ with rate function
\begin{equation}\label{eq:rate-function}
 I(x)=
 \begin{cases}
 \displaystyle
 \frac12\int_0^T\sum_{k=1}^2
 \frac{\left|\dot x_t^k-\sum_{j=1}^2q_{kj}x_t^j\right|^2}
 {\gamma_kx_t^1x_t^2}\dd t,
 &\begin{array}{l}
 x\in AC([0,T];\Xo),\\ x(0)=x_0,
 \end{array}\\[7mm]
 \infty,&\text{otherwise.}
 \end{cases}
\end{equation}
Here $AC([0,T];\Xo)$ denotes the set of absolutely continuous paths taking values in $\Xo$.  
\end{theorem}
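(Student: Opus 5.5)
The statement is the large deviation principle, Theorem~\ref{thm:ldp}. I will verify the two conditions of the Budhiraja--Dupuis--Maroulas criterion recalled in Appendix~\ref{app:bdm}.

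Write the diffusion matrix as $\sigma(x)=\mathrm{diag}(\sqrt{\gamma_1x^1x^2},\sqrt{\gamma_2x^1x^2})$. For $h\in L^2([0,T];\R^2)$, consider the skeleton equation
\[
\dot x^k_t=\sum_j q_{kj}x^j_t+\sqrt{\gamma_kx^1_tx^2_t}\,h^k_t,\qquad x_0=x_0 .
\]
Define $\mathcal G^0(\int_0^\cdot h)$ as its solution. The plan has three steps: (i) well-posedness of the skeleton equation, with solutions staying in $\Xo$; (ii) continuity of $h\mapsto x^h$ on the balls $S_N=\{h:\int_0^T|h|^2\le N\}$ with the weak topology; (iii) convergence in law of the controlled processes $X^{\eps,h^\eps}$ to $x^h$ whenever $h^\eps\to h$ in law in $S_N$. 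The rate function then reads $I(x)=\inf\{\frac12\int|h|^2:x=x^h\}$. Off the boundary $\sigma$ is invertible, so for $x\in AC([0,T];\Xo)$ the unique control is $h^k=(\dot x^k-\sum_jq_{kj}x^j)/\sqrt{\gamma_kx^1x^2}$, which gives \eqref{eq:rate-function}. Paths touching $\partial\X$ must receive $I=\infty$, and this is exactly what step (i) has to show.

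\textbf{Key step (i): uniform separation from the boundary.} This is where the assumption $q_{12},q_{21}>0$ enters, and I expect it to be the main obstacle.

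Upper bound: with $s=x^1+x^2$, we have $\dot s\le Cs+C\,s\,|h|$ because $\sqrt{x^1x^2}\le s$. Gronwall then gives $\sup_t s_t\le s_0\exp(CT+C\sqrt{NT})$.

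Lower bound: first control the product $p=x^1x^2$ through $\log$-type quantities. We have $\frac{d}{dt}\log x^k\ge q_{kk}-|h^k|\sqrt{\gamma_k x^{3-k}/x^k}$. This is not directly integrable near zero, so I will instead argue on $y^k=\sqrt{x^k}$:
\[
\dot y^k=\tfrac{1}{2y^k}\Big(\sum_jq_{kj}x^j\Big)+\tfrac12\sqrt{\gamma_k}\,y^{3-k}h^k .
\]
As long as both components are positive, the drift $q_{kk}y^k/2+q_{k,3-k}(y^{3-k})^2/(2y^k)$ is at least $-c\,y^k$, and the noise term is at most $C|h^k|$. The coupling drift $q_{12}x^2/(2y^1)$ blows up as $y^1\to0$ while $x^2$ stays positive, so $x^1$ cannot hit zero while $x^2$ is positive. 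Both cannot vanish simultaneously, since near the origin $\dot x^k\ge q_{kk}x^k-\sqrt{\gamma_kx^1x^2}|h^k|$. Summing this gives $\dot s\ge-(c+C|h|)s$, hence $s_t\ge s_0e^{-cT-C\sqrt{NT}}$.

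Combining the two pieces yields a quantitative bound $\inf_{t,h\in S_N}\min_k x^{h,k}_t\ge\delta_N>0$. To make the single-component step quantitative: if $x^1$ is small then $x^2\ge s/2\ge c_N$, so $\dot x^1\ge q_{12}c_N-C\sqrt{x^1}|h^1|-|q_{11}|x^1$. A comparison argument then shows that $x^1$ cannot go below a level $\delta_N$ depending only on $N$. Concretely, I will use the drop from $2\delta$ to $\delta$ occurring over a time span $\tau$. The drift contributes $\ge q_{12}c_N\tau$, while $\int\sqrt{x^1}|h|\le\sqrt{2\delta}\sqrt{N\tau}$. For small $\delta$ these two cannot combine to produce a decrease of $\delta$.

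With the solution confined to the compact set $[\delta_N,R_N]^2$, the coefficients are Lipschitz there. Uniqueness and existence then follow by localizing, and continuity in $h$ for the weak topology follows by the usual argument: equicontinuity plus Arzel\`a--Ascoli, passing to the limit in $\int\sigma(x^{h_n})h_n$ using the uniform convergence of $x^{h_n}$.

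\textbf{Step (iii).} The controlled SDE is
\[
\dd X^{\eps}=QX^\eps\dd t+\sigma(X^\eps)h^\eps\dd t+\sqrt\eps\,\sigma(X^\eps)\dd W .
\]
Its well-posedness follows from Girsanov together with Theorem~\ref{thm:pathwise-uniqueness} and weak uniqueness. I will obtain moment bounds on $\sup_t|X^\eps_t|$ by BDG and Gronwall, using $\sigma(x)\le C|x|$. Tightness follows from these bounds, and the stochastic integral term is $O(\sqrt\eps)$ in $L^2$. Any limit point satisfies the skeleton equation with control $h$, by Skorokhod representation and the continuity of $\sigma$ (H\"older continuity suffices). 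Uniqueness from step (i) then identifies the limit as $x^h$.

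The delicate point is that limit points could a priori touch the boundary, where the skeleton was only shown to be unique among positive paths. To handle this I will establish uniqueness for the skeleton equation among all nonnegative solutions: the lower-bound argument in step (i) applies to any nonnegative solution started in $\Xo$, which rules out this difficulty.
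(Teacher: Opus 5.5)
\textbf{Gap in step (i).} The problem is in step (i), at the point you flag as the main obstacle. Your single-band estimate does not prove the lower bound. The inequality $\dot x^1\ge a-C\sqrt{x^1}\,|h^1|-|q_{11}|x^1$ (with $a\approx q_{12}c_N$) is scale-critical for $L^2$ controls.

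Suppose $x^1$ drops from $2\delta$ to $\delta$ on an interval of length $\tau$ carrying energy $N_\tau$. You obtain $\delta+a'\tau\le C\sqrt{2\delta N_\tau\tau}$, where $a'=a-2|q_{11}|\delta>0$. Since $\delta+a'\tau\ge 2\sqrt{a'\delta\tau}$, this only yields $N_\tau\ge 2a'/C^2$, a bound independent of $\delta$. Conversely, for the model inequality, the choice $h^1\equiv 2a/(C\sqrt\delta)$ on an interval of length $\delta/a$ forces $\dot x^1\le -a$ while $x^1\ge\delta$. It therefore produces the drop at energy cost $4a/C^2$, for every $\delta$. So once $N$ exceeds a fixed multiple of $a/C^2$, your claim ``for small $\delta$ these two cannot combine to produce a decrease of $\delta$'' is false: smallness of $\delta$ buys nothing.

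The same criticality undermines the qualitative claim that the blow-up of $q_{12}x^2/(2y^1)$ prevents $y^1$ from hitting zero. A drift of size $a/y$ against a control term $C|h|$ with $h\in L^2$ is balanced at every scale. The conclusion is still true, but only through a multiscale count, which your proposal does not contain. Each dyadic halving costs a fixed amount of energy, so at most $O(N)$ halvings can occur, giving $\delta_N\approx x_0^1\,2^{-O(N)}$.

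\textbf{The missing idea.} You discarded the right tool when you wrote $\frac{d}{dt}\log x^k\ge q_{kk}-|h^k|\sqrt{\gamma_kx^{3-k}/x^k}$: do not drop the coupling term. With $y=\sqrt{x^2/x^1}$, completing the square and using $q_{12}>0$ gives
\[
\frac{d}{dt}\log x^1=q_{11}+q_{12}y^2+\sqrt{\gamma_1}\,y\,h^1\ge q_{11}-\frac{\gamma_1}{4q_{12}}|h^1|^2 .
\]
The same holds for $x^2$ using $q_{21}>0$. Integrate up to the first time a coordinate hits zero. This gives $x^k_t\ge x^k_0\exp\bigl(-|q_{kk}|T-\gamma_kN/(4q_{k,3-k})\bigr)$ for every nonnegative solution started in $\Xo$. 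It needs no bounds on $s$, and it also settles the boundary issue you raise in step (iii).

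This is exactly the paper's mechanism. The paper carries it out for $\log p_t$ and $\log(1-p_t)$ in the coordinates $p=x^1/(x^1+x^2)$, $s=x^1+x^2$. It then proves the LDP for $(P^\eps,S^\eps)$ and transfers it to $X^\eps$ by the contraction principle.

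With that replacement, the rest of your plan is a sound alternative to the paper's route. That covers working directly in $x$-coordinates on $C([0,T];\X)$, continuity of $h\mapsto x^h$ on weakly topologized balls, tightness and identification of controlled limits, and reading off the control where $\sigma$ is invertible.
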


\begin{remark}
The integral expression in \eqref{eq:rate-function} is used only for paths that remain in the interior $\Xo$ throughout the time interval. In particular, paths with an initial value different from $x_0$, as well as paths that touch the boundary of $\X$, have infinite cost.
	The strict positivity of $q_{12}$ and $q_{21}$ in Theorem \ref{thm:ldp} is used to keep the controlled skeleton paths away from the boundary on finite-energy level sets.  Reducible cases, for instance $q_{12}=0$ or $q_{21}=0$, require an additional boundary convention and are not treated here.
\end{remark}

\section{Strong uniqueness}\label{sec:strong-uniqueness}

The main idea is to construct a one-dimensional process $P_t$ related to $(X_t^1,X_t^2)$ that satisfies an autonomous SDE.  Once $P_t$ is determined, the sum process $S_t=X_t^1+X_t^2$ satisfies a linear SDE.  The mapping
\[
 (X_t^1,X_t^2)\longleftrightarrow (P_t,S_t),\quad
 P_t=\frac{X_t^1}{X_t^1+X_t^2},\quad S_t=X_t^1+X_t^2,
\]
is bijective whenever $S_t>0$.

Let
\[
 \tau=\inf\{t\ge0:X_t^1+X_t^2=0\}.
\]
If $\tau<\infty$ for a path, it is clear that the process will remain at $(0,0)$ after that. Therefore, we only need to prove pathwise uniqueness
	before time $\tau$. 
On $[0,\tau)$ define $P_t=X_t^1/(X_t^1+X_t^2)$ and $S_t=X_t^1+X_t^2$.

\begin{lemma}\label{lem:P-equation}
The process $P_t$ satisfies, on $[0,\tau)$,
\begin{equation}\label{eq:P-equation}
\begin{aligned}
 \dd P_t={}&b(P_t)\dd t+
 \sqrt{P_t(1-P_t)}
 \left(\sqrt{\gamma_1}(1-P_t)\dd W_t^1-\sqrt{\gamma_2}P_t\dd W_t^2\right),
\end{aligned}
\end{equation}
where
\begin{equation}\label{eq:b-with-ito}
 b(p)=q_{12}(1-p)^2+(q_{11}-q_{22})p(1-p)-q_{21}p^2
 -\gamma_1p(1-p)^2+\gamma_2p^2(1-p).
\end{equation}
Moreover, pathwise uniqueness holds for \eqref{eq:P-equation} among $[0,1]$-valued solutions.
\end{lemma}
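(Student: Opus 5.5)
The plan is to obtain \eqref{eq:P-equation} from It\^o's formula applied to $f(x_1,x_2)=x_1/(x_1+x_2)$, and then to prove pathwise uniqueness by a Yamada--Watanabe argument with a vector-valued noise. Since $f$ is smooth on $\{x_1+x_2>0\}$, I would localize with $\tau_n=\inf\{t:S_t\le 1/n\}\wedge n$, which increases to $\tau$. On $[0,\tau_n]$ It\^o's formula is legitimate, and the identity then extends to $[0,\tau)$.

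For the It\^o computation, write $s=x_1+x_2$ and $p=x_1/s$. The derivatives are
\[
f_1=\frac{x_2}{s^2},\qquad f_2=-\frac{x_1}{s^2},\qquad f_{11}=-\frac{2x_2}{s^3},\qquad f_{22}=\frac{2x_1}{s^3}.
\]
From \eqref{eq:main-sde}, $\dd\langle X^k\rangle_t=\gamma_kX^1_tX^2_t\dd t$ and $\dd\langle X^1,X^2\rangle_t=0$, because $W^1$ and $W^2$ are independent.

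\begin{itemize}
\item \emph{First-order drift.} It equals $\frac{x_2}{s^2}(q_{11}x_1+q_{12}x_2)-\frac{x_1}{s^2}(q_{21}x_1+q_{22}x_2)$, which is $q_{12}(1-p)^2+(q_{11}-q_{22})p(1-p)-q_{21}p^2$.
\item \emph{It\^o correction.} It equals $-\gamma_1x_1x_2^2/s^3+\gamma_2x_1^2x_2/s^3$, which is $-\gamma_1p(1-p)^2+\gamma_2p^2(1-p)$.
\item \emph{Martingale part.} Using $\sqrt{x_1x_2}=s\sqrt{p(1-p)}$, the factor $s$ cancels against $1/s^2\cdot s$. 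This leaves exactly $\sqrt{p(1-p)}\,\big(\sqrt{\gamma_1}(1-p)\dd W^1-\sqrt{\gamma_2}\,p\dd W^2\big)$.
\end{itemize}

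The key structural point is that all dependence on $S$ cancels, so the equation for $P$ is autonomous. This gives \eqref{eq:P-equation} with $b$ as in \eqref{eq:b-with-ito}.

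For pathwise uniqueness, let $P,P'$ be two $[0,1]$-valued solutions on the same space, driven by the same $(W^1,W^2)$ with $P_0=P'_0$. Write $\sigma(p)=\sqrt{p(1-p)}\,(\sqrt{\gamma_1}(1-p),-\sqrt{\gamma_2}p)\in\R^2$ and set $D_t=P_t-P'_t$.

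\begin{itemize}
\item \emph{Regularity of the coefficients.} $b$ is a polynomial, hence Lipschitz on $[0,1]$ with some constant $L$. For $\sigma$, the map $p\mapsto p(1-p)$ is Lipschitz on $[0,1]$, and $|\sqrt a-\sqrt c|^2\le|a-c|$. Combining this with boundedness and Lipschitz continuity of the polynomial factors gives $|\sigma(p)-\sigma(p')|^2\le C|p-p'|$ for $p,p'\in[0,1]$.
\item \emph{Yamada--Watanabe functions.} I would take the standard sequence $a_n\downarrow0$ with $\int_{a_n}^{a_{n-1}}u^{-1}\dd u=n$, and functions $\phi_n\in C^2$ with $\phi_n(x)\uparrow|x|$, $|\phi_n'|\le1$ and $0\le\phi_n''(x)\le\frac{2}{n|x|}\mathbf 1_{a_n<|x|<a_{n-1}}$.
\item \emph{Applying It\^o's formula to $\phi_n(D_t)$.} The martingale term is a true martingale, since all coefficients are bounded on $[0,1]$. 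The second-order term is bounded by $\frac12\int_0^t\phi_n''(D_s)\,C|D_s|\dd s\le Ct/n$. The drift term is bounded by $L\int_0^t|D_s|\dd s$. The vector noise causes no difficulty, because $\dd\langle D\rangle_t=|\sigma(P_t)-\sigma(P'_t)|^2\dd t$.
\item \emph{Conclusion.} Letting $n\to\infty$ gives $\E|D_t|\le L\int_0^t\E|D_s|\dd s$. Gronwall then yields $D\equiv0$, and path continuity gives indistinguishability.
\end{itemize}

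I expect the main point needing care to be the H\"older-$\frac12$ bound for the vector coefficient $\sigma$, since this is exactly where the $\int_{0+}\rho(u)^{-1}\dd u=\infty$ condition with $\rho(u)=Cu$ enters. A secondary point is stating the uniqueness for solutions defined on a common interval, including up to stopping times such as $\tau$. This is handled by running the same argument with all integrals stopped at an arbitrary stopping time.
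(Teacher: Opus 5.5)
Your proposal is correct and follows essentially the same route as the paper: It\^o's formula for $x_1/(x_1+x_2)$, which you carry out in more detail and with an explicit localization the paper omits, followed by a Yamada--Watanabe argument combining a Lipschitz drift, a H\"older-$\frac12$ bound on the vector diffusion coefficient, and Gronwall's lemma. The only minor difference is in that H\"older bound: the paper notes that $\sigma_1^2=\gamma_1p(1-p)^3$ and $\sigma_2^2=\gamma_2p^3(1-p)$ are $C^1$ and each $\sigma_i$ has a fixed sign, whereas you factor $\sigma$ as $\sqrt{p(1-p)}$ times a bounded Lipschitz polynomial, and both give $|\sigma(p)-\sigma(q)|^2\le C|p-q|$ on $[0,1]$.
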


\begin{proof}

 After some computations based on It\^o's formula, the drift contribution from the linear terms as
\begin{align*}
 &(1-p)\{q_{11}p+q_{12}(1-p)\}-p\{q_{21}p+q_{22}(1-p)\} \\
 &\quad =q_{12}(1-p)^2+(q_{11}-q_{22})p(1-p)-q_{21}p^2,
\end{align*}
and the It\^o correction as
$
 -\gamma_1p(1-p)^2+\gamma_2p^2(1-p).
$
The martingale part is exactly the martingale part in \eqref{eq:P-equation}.

It remains to prove pathwise uniqueness for \eqref{eq:P-equation}.  Denote
\[
 \sigma_1(p)=\sqrt{\gamma_1}\sqrt{p(1-p)}(1-p),\quad
 \sigma_2(p)=-\sqrt{\gamma_2}\sqrt{p(1-p)}p.
\]
Since $b$ is a polynomial on $[0,1]$, it is Lipschitz there. Thus there exists $L_b>0$ such that
\[
 |b(p)-b(q)|\le L_b|p-q|,
 \quad p,q\in[0,1].
\]
Moreover,
\[
 a_1(p):=\sigma_1(p)^2=\gamma_1p(1-p)^3,
 \quad
 a_2(p):=\sigma_2(p)^2=\gamma_2p^3(1-p)
\]
are $C^1$ functions on $[0,1]$.  Hence, for some finite constant $L_\sigma$,
\[
 |a_1(p)-a_1(q)|+|a_2(p)-a_2(q)|\le L_\sigma|p-q|,
 \quad p,q\in[0,1].
\]
Since $(\sqrt u-\sqrt v)^2\le |u-v|$ for $u,v\ge0$, we obtain
\begin{equation}\label{eq:sigma-yw-bound}
 |\sigma_1(p)-\sigma_1(q)|^2+|\sigma_2(p)-\sigma_2(q)|^2\le L_\sigma|p-q|,
 \quad p,q\in[0,1].
\end{equation}

Let $P_t$ and $\widetilde P_t$ be two $[0,1]$-valued solutions driven by the same Brownian motions and with the same initial value.  Set $\Delta_t=P_t-\widetilde P_t$.  Let $(\varphi_n)_{n\ge1}\subset C^2(\R)$ be the standard Yamada--Watanabe convex approximation of $|x|$, with $\varphi_n(x)\uparrow |x|$, $|\varphi_n'(x)|\le1$, and
\[
 0\le\varphi_n''(x)\le \frac{2}{n|x|},\quad x\ne0.
\]
Applying It\^o's formula to $\varphi_n(\Delta_t)$ and using \eqref{eq:sigma-yw-bound}, we obtain
\begin{align*}
 \E[\varphi_n(\Delta_{t\wedge\tau})]
  &= \mathbb{E} \int_0^{t\wedge\tau} \varphi_n^{\prime}\left(\Delta_s\right)\left(b\left(P_s\right)-b\left(\widetilde{P}_s\right)\right) \mathrm{d} s \\
 & \quad \quad +\frac{1}{2} \mathbb{E} \int_0^{t\wedge\tau} \varphi_n^{\prime \prime}\left(\Delta_s\right)\left(\left|\sigma_1\left(P_s\right)-\sigma_1\left(\widetilde{P}_s\right)\right|^2+\left|\sigma_2\left(P_s\right)-\sigma_2\left(\widetilde{P}_s\right)\right|^2\right) \mathrm{d} s\\
  &\le L_b\int_0^t\E|\Delta_{s\wedge\tau}|\dd s
 +\frac12L_\sigma\E\int_0^{t\wedge\tau}\varphi_n''(\Delta_s)|\Delta_s|\dd s \\
 &\le L_b\int_0^t\E|\Delta_{s\wedge\tau}|\dd s+\frac{L_\sigma t}{n}.
\end{align*}
Letting $n\to\infty$ and using monotone convergence gives
\[
 \E|\Delta_{t\wedge\tau}|\le L_b\int_0^t\E|\Delta_{s\wedge\tau}|\dd s.
\]
Gronwall's lemma implies $\E|\Delta_{t\wedge\tau}|=0$ for all $t\le T$.  Since $T$ is arbitrary, pathwise uniqueness follows.
\end{proof}

\begin{lemma}\label{lem:S-equation}
On $[0,\tau)$, the process $S_t=X_t^1+X_t^2$ satisfies
\begin{equation}\label{eq:S-equation}
 \dd S_t=S_tA(P_t)\dd t+S_t\sqrt{P_t(1-P_t)}
 \left(\sqrt{\gamma_1}\dd W_t^1+\sqrt{\gamma_2}\dd W_t^2\right),
\end{equation}
where
\begin{equation}\label{eq:A-def}
 A(p)=(q_{11}+q_{21})p+(q_{12}+q_{22})(1-p).
\end{equation}
Once $P_t$ is fixed, \eqref{eq:S-equation} has a unique strong solution, given by the stochastic exponential
\begin{align}\label{eq:S-explicit}
 S_t=s_0\exp\Bigg\{&\int_0^t\left[A(P_r)-\frac12(\gamma_1+\gamma_2)P_r(1-P_r)\right]\dd r\nonumber \\
 &\qquad +\int_0^t\sqrt{P_r(1-P_r)}
 \left(\sqrt{\gamma_1}\dd W_r^1+\sqrt{\gamma_2}\dd W_r^2\right)\Bigg\},
\end{align}
where $s_0=x_0^1+x_0^2$.
\end{lemma}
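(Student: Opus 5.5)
The derivation of \eqref{eq:S-equation} is a direct summation of the two equations in \eqref{eq:main-sde}. Adding the components, the drift is
\[
(q_{11}+q_{21})X_t^1+(q_{12}+q_{22})X_t^2 .
\]
On $[0,\tau)$ we have $S_t>0$, so we may substitute $X_t^1=S_tP_t$ and $X_t^2=S_t(1-P_t)$. This turns the drift into $S_tA(P_t)$, with $A$ as in \eqref{eq:A-def}. For the martingale part, the same substitution gives $X_t^1X_t^2=S_t^2P_t(1-P_t)$. Hence
\[
\sqrt{\gamma_kX_t^1X_t^2}=\sqrt{\gamma_k}\,S_t\sqrt{P_t(1-P_t)},
\]
where we use $S_t\ge0$ to take the square root. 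Summing over $k$ gives exactly the diffusion term in \eqref{eq:S-equation}. No Itô correction appears, since $S$ is a linear function of $X$.

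For uniqueness and the explicit form, we regard $P$ as a given continuous adapted $[0,1]$-valued process, which is pathwise determined by Lemma \ref{lem:P-equation}. Then \eqref{eq:S-equation} is a linear SDE $\dd S_t=S_t\,\dd Y_t$ with the continuous semimartingale
\[
Y_t=\int_0^tA(P_r)\dd r+\int_0^t\sqrt{P_r(1-P_r)}\bigl(\sqrt{\gamma_1}\dd W_r^1+\sqrt{\gamma_2}\dd W_r^2\bigr).
\]
The integrands of $Y$ are bounded because $P_r\in[0,1]$. Since $W^1$ and $W^2$ are independent, the quadratic variation is
\[
\langle Y\rangle_t=(\gamma_1+\gamma_2)\int_0^tP_r(1-P_r)\dd r.
\]
The Doléans-Dade exponential $s_0\exp\{Y_t-\tfrac12\langle Y\rangle_t\}$ therefore coincides with \eqref{eq:S-explicit}, and Itô's formula confirms that it solves the equation.

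Uniqueness follows by the standard argument. Let $\mathcal E_t$ denote this exponential, which is strictly positive. For any solution $S$, apply Itô's formula to $S_t\mathcal E_t^{-1}$. We have
\[
\dd(\mathcal E_t^{-1})=\mathcal E_t^{-1}\bigl(-\dd Y_t+\dd\langle Y\rangle_t\bigr).
\]
The cross-variation term $-S_t\mathcal E_t^{-1}\dd\langle Y\rangle_t$ then cancels the remaining terms, so $\dd(S_t\mathcal E_t^{-1})=0$ and $S_t=s_0\mathcal E_t$. The same conclusion also follows because the coefficients $s\mapsto sA(P_t)$ and $s\mapsto s\sqrt{P_t(1-P_t)}$ are globally Lipschitz in $s$ with bounded random constants.

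The main point needing care is the localization to $[0,\tau)$. The identities $X^1=SP$ and $X^2=S(1-P)$ only make sense where $S>0$. I would therefore work on $[0,\tau_n)$ with
\[
\tau_n=\inf\{t:S_t\le 1/n\},
\]
establish \eqref{eq:S-equation} and $S=s_0\mathcal E$ there, and let $n\to\infty$. A consequence is that $\mathcal E_t>0$ for every finite $t$, which suggests $\tau=\infty$ almost surely. This is consistent with the formula, but the lemma only claims the statement on $[0,\tau)$.
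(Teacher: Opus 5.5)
Your proposal is correct and follows the paper's route: sum the two equations, rewrite the coefficients via $X^1=SP$ and $X^2=S(1-P)$, and identify the solution of the resulting linear equation with the Dol\'eans-Dade exponential. Your uniqueness argument via $S_t\mathcal E_t^{-1}$ is more explicit than the paper's one-line appeal to linearity.

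The only step you leave unproved is one the paper does carry out: it turns your closing remark into a proof that $\tau=\infty$ a.s. On $\{\tau<\infty\}$ the exponent in \eqref{eq:S-explicit} has a finite limit as $t\uparrow\tau$ because its integrands are bounded, so $S_{\tau-}>0$, contradicting $S_\tau=0$. Positivity of $\mathcal E_t$ for $t<\tau$ alone would not suffice, and the proof of Theorem \ref{thm:pathwise-uniqueness} cites exactly this fact from the lemma.
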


\begin{proof}
Adding the two equations in \eqref{eq:main-sde} gives \eqref{eq:S-equation}.  For a fixed path $P_t$, this is a linear SDE with continuous adapted coefficients. Hence, it has a unique strong solution up to time $\tau$.  Formula \eqref{eq:S-explicit} is the usual Doleans-Dade's exponential representation, again up to time $\tau$. If $\tau<\infty$, then
\[\lim_{t\to \tau-}S_t=s_0\exp\(\mbox{a finite random variable}\)>0.\]
Thus, $\tau=\infty$ a.s. and  $S_t>0$ for every finite $t$.
\end{proof}

\begin{proof}[Proof of Theorem \ref{thm:pathwise-uniqueness}]
	
Let $X_t$ and $\widetilde X_t$ be two solutions of \eqref{eq:main-sde} with the same initial condition and driven by the same Brownian motion.  Define $S_t=X^1_t+X^2_t$ and $\widetilde S_t=\widetilde X^1_t+\widetilde X^2_t$.  By Lemma \ref{lem:S-equation}, $\tau=\tilde{\tau}=\infty$ a.s.

The associated processes $P_t$ and $\widetilde P_t$ are $[0,1]$-valued solutions of \eqref{eq:P-equation} with the same initial value.  Lemma \ref{lem:P-equation} gives $P_t=\widetilde P_t$ a.s.  Then Lemma \ref{lem:S-equation} gives $S_t=\widetilde S_t$ a.s.  We obtain,
\[
 X_t^1=P_tS_t,\quad X_t^2=(1-P_t)S_t,
\]
and the same identities hold for $\widetilde X_t$.  Hence $X_t=\widetilde X_t$ a.s.
\end{proof}

\section{Large deviation principle}\label{sec:ldp}

Throughout this section we assume
$
 q_{12},q_{21}>0,
$
and $x_0\in\Xo$.  Define
\[
 p_0=\frac{x_0^1}{x_0^1+x_0^2},\quad s_0=x_0^1+x_0^2.
\]
For $X^\eps_t$ solving \eqref{eq:small-noise-x}, set
\[
 P_t^\eps=\frac{X_t^{\eps,1}}{X_t^{\eps,1}+X_t^{\eps,2}},\quad
 S_t^\eps=X_t^{\eps,1}+X_t^{\eps,2}.
\]
As in Section \ref{sec:strong-uniqueness}, $S^\eps_t$ remains strictly positive on finite time intervals.

Let
\begin{align*}
 b_0(p)&=q_{12}(1-p)^2+(q_{11}-q_{22})p(1-p)-q_{21}p^2,\\
 b_1(p)&=-\gamma_1p(1-p)^2+\gamma_2p^2(1-p),
\end{align*}
and let $A$ be defined by \eqref{eq:A-def}.   Also define

\[
\begin{aligned}
	\alpha_1(p)
	&=\sqrt{\gamma_1}\sqrt{p(1-p)}(1-p),
	&\quad
	\alpha_2(p)
	&=-\sqrt{\gamma_2}\sqrt{p(1-p)}p,
	\\[-2mm]
	\eta_1(p)
	&=\sqrt{\gamma_1}\sqrt{p(1-p)},
	&
	\eta_2(p)
	&=\sqrt{\gamma_2}\sqrt{p(1-p)}.
\end{aligned}
\]

Then $P^\eps_t$ and $S^\eps_t$ satisfy the  equations
\begin{equation}\label{eq:Peps-correct}
 \dd P_t^\eps=\{b_0(P_t^\eps)+\eps b_1(P_t^\eps)\}\dd t
 +\sqrt{\eps}\left\{\alpha_1(P_t^\eps)\dd W_t^1+\alpha_2(P_t^\eps)\dd W_t^2\right\},
\end{equation}
\begin{equation}\label{eq:Seps-correct}
 \dd S_t^\eps=S_t^\eps A(P_t^\eps)\dd t
 +\sqrt{\eps} S_t^\eps\left\{\eta_1(P_t^\eps)\dd W_t^1+\eta_2(P_t^\eps)\dd W_t^2\right\}.
\end{equation}

Let $u=(u^1,u^2)\in L^2([0,T];\R^2)$.  The controlled skeleton associated with \eqref{eq:Peps-correct}--\eqref{eq:Seps-correct} is
\begin{equation}\label{eq:p-skeleton}
 \dot p_t=b_0(p_t)+\alpha_1(p_t)u_t^1+\alpha_2(p_t)u_t^2,
 \quad p(0)=p_0,
\end{equation}
and
\begin{equation}\label{eq:s-skeleton}
 \dot s_t=s_t\left\{A(p_t)+\eta_1(p_t)u_t^1+\eta_2(p_t)u_t^2\right\},
 \quad s(0)=s_0.
\end{equation}

\begin{lemma}[Well-posedness and boundary separation of the skeleton]\label{lem:skeleton-wellposed}
For every $u\in L^2([0,T];\R^2)$, equations \eqref{eq:p-skeleton}--\eqref{eq:s-skeleton} have a unique solution $(p_t,s_t)\in C([0,T];(0,1)\times(0,\infty))$.  Moreover, for every $N<\infty$, there are constants $0<\delta_N<1/2$ and $0<m_N<M_N<\infty$ such that, whenever
\[
 \int_0^T |u_t|^2\dd t\le N,
\]
one has
\begin{equation}\label{eq:uniform-boundary-separation}
 \delta_N\le p_t\le1-\delta_N,
 \quad
 m_N\le s_t\le M_N,
 \quad 0\le t\le T.
\end{equation}
\end{lemma}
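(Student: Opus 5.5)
The plan is to treat \eqref{eq:p-skeleton} first, since \eqref{eq:s-skeleton} is linear in $s$ once $p$ is known. On the open interval $(0,1)$ the coefficients $b_0,\alpha_1,\alpha_2$ are smooth, hence locally Lipschitz. Because $u\in L^2\subset L^1$, the right-hand side of \eqref{eq:p-skeleton} is a Carath\'eodory vector field, locally Lipschitz in $p$ with an integrable Lipschitz modulus $C_K(1+|u_t|)$ on each compact $K\subset(0,1)$. The Carath\'eodory existence theorem together with Gronwall's lemma therefore gives a unique local absolutely continuous solution up to the exit time from $(0,1)$. Global existence on $[0,T]$ will follow once we have an a priori bound keeping $p$ in a compact subinterval $[\delta_N,1-\delta_N]$. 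For \eqref{eq:s-skeleton} we then write the explicit solution
\[
s_t=s_0\exp\Big\{\int_0^t\big(A(p_r)+\eta_1(p_r)u_r^1+\eta_2(p_r)u_r^2\big)\dd r\Big\}.
\]
Since $A$ and $\eta_k$ are bounded on $[0,1]$ and $\int_0^T|u_r|\dd r\le\sqrt{TN}$, this gives $m_N\le s_t\le M_N$ with, e.g., $M_N=s_0e^{C(T+\sqrt{TN})}$ and $m_N=s_0e^{-C(T+\sqrt{TN})}$. Uniqueness of $s$ is immediate from linearity.

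The key step, and the main obstacle, is the boundary separation for $p$. Note that $\alpha_k$ is only $\tfrac12$-H\"older at $0$ and $1$, so a naive Gronwall estimate for $\log p$ fails: $|\alpha\cdot u|/p\sim|u|/\sqrt p$ is not integrable uniformly. I would instead exploit the inward drift $b_0(0)=q_{12}>0$ (and $b_0(1)=-q_{21}<0$) through the substitution $y_t=\sqrt{p_t}$. Choose $\delta_0\in(0,1/2)$ so small that $b_0(p)\ge q_{12}/2$ for $p\le\delta_0$. On $\{p_t\le\delta_0\}$ we have $|\alpha_1u^1+\alpha_2u^2|\le C\sqrt{p_t}\,|u_t|$, and therefore
\[
\frac{\dd}{\dd t}\log y_t=\frac{\dot p_t}{2p_t}\ge \frac{q_{12}}{4y_t^2}-\frac{C|u_t|}{2y_t}\ge -\frac{C^2}{4q_{12}}|u_t|^2,
\]
where the last step uses Young's inequality $\frac{C|u|}{2y}\le\frac{q_{12}}{4y^2}+\frac{C^2|u|^2}{4q_{12}}$. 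The positive drift thus absorbs the singular part of the control, and what remains is an $L^1$ (energy) term.

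Now consider any excursion of $p$ below $\delta_0$: either it starts at time $0$ (if $p_0<\delta_0$), or it starts at a time where $p=\delta_0$. Integrating the inequality along the excursion gives
\[
\log y_t\ge \log\sqrt{\min(p_0,\delta_0)}-\frac{C^2N}{4q_{12}},
\]
and hence $p_t\ge\min(p_0,\delta_0)\,e^{-C^2N/(2q_{12})}$. The same computation with $1-p$ in place of $p$, using $-b_0(1)=q_{21}>0$, yields the symmetric bound near $1$. Taking $\delta_N$ as the minimum of the two bounds (and less than $1/2$) gives \eqref{eq:uniform-boundary-separation}. This bound is uniform over the level set, depending only on $N$, $T$, $p_0$ and the model constants.

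Finally, the a priori bound shows the local solution never reaches $\{0,1\}$, so it extends to all of $[0,T]$. This completes well-posedness in $C([0,T];(0,1)\times(0,\infty))$.
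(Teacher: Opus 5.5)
Your proposal is correct and follows essentially the paper's argument: both absorb the singular control term of size $|u_t|/\sqrt{p_t}$ into the inward drift $q_{12}/p_t$ (and symmetrically $q_{21}/(1-p_t)$) by completing the square, leaving only an integrable $|u_t|^2$ term, and both handle $s$ through the explicit exponential formula. The only difference is packaging. You localize to $\{p\le\delta_0\}$ with $b_0\ge q_{12}/2$ and use an excursion argument, which gives a $T$-independent $\delta_N$; the paper instead completes the square globally in $\frac{\dd}{\dd t}\log p_t$ using the explicit form of $b_0(p)/p$ and integrates directly.
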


\begin{proof}
We first prove that, on bounded control-energy sets, the trajectories
$p_t$ stay uniformly away from both boundaries.  For $p_t\in(0,1)$,
\begin{align*}
 \frac{\dd}{\dd t}\log p_t
 &=\frac{b_0(p_t)}{p_t}
 +\sqrt{\gamma_1}\frac{(1-p_t)^{3/2}}{p_t^{1/2}}u_t^1
 -\sqrt{\gamma_2}\sqrt{p_t(1-p_t)}u_t^2 \\
 &=q_{12}\frac{(1-p_t)^2}{p_t}+(q_{11}-q_{22})(1-p_t)-q_{21}p_t \\
 &\qquad +\sqrt{\gamma_1}\frac{(1-p_t)^{3/2}}{p_t^{1/2}}u_t^1
 -\sqrt{\gamma_2}\sqrt{p_t(1-p_t)}u_t^2.
\end{align*}
Using $ay^2+by\ge -b^2/(4a)$ with $a=q_{12}$ and
$y=(1-p_t)/\sqrt{p_t}$, we obtain
\[
 q_{12}\frac{(1-p_t)^2}{p_t}
 +\sqrt{\gamma_1}\frac{(1-p_t)^{3/2}}{p_t^{1/2}}u_t^1
 \ge -C|u_t^1|^2,
\]
where $C>0$ depends only on the coefficients.

The remaining terms are bounded below by $-C(1+|u_t^2|^2)$.  Hence
\begin{equation}\label{eq:log-p-lower}
 \frac{\dd}{\dd t}\log p_t\ge -C(1+|u_t|^2).
\end{equation}

Similarly,
\begin{align*}
	\frac{\dd}{\dd t}\log(1-p_t)
	&=-\frac{b_0(p_t)}{1-p_t}
	-\sqrt{\gamma_1}\sqrt{p_t(1-p_t)}u_t^1
	+\sqrt{\gamma_2}\frac{p_t^{3/2}}{(1-p_t)^{1/2}}u_t^2
\end{align*}
leads, using $q_{21}>0$, to
\begin{equation}\label{eq:log-one-minus-p-lower}
	\frac{\dd}{\dd t}\log(1-p_t)\ge -C(1+|u_t|^2).
\end{equation}
Integrating \eqref{eq:log-p-lower} and \eqref{eq:log-one-minus-p-lower} gives the two-sided bound for $p$ in \eqref{eq:uniform-boundary-separation}, with $\delta_N$ depending only on $N,T,p_0$ and the coefficients.

On $[\delta_N,1-\delta_N]$, the coefficients in \eqref{eq:p-skeleton} are Lipschitz.  If $p_t$ and $\widetilde p_t$ are two solutions driven by the same $u_t$, then
\[
 |p_t-\widetilde p_t|\le \int_0^t C(1+|u_r|)|p_r-\widetilde p_r|\dd r,
\]
and Gronwall's lemma yields uniqueness.  Existence follows by standard localization and the preceding boundary estimates.

Once $p_t$ is fixed, \eqref{eq:s-skeleton} is linear and has the explicit solution
\begin{equation}\label{eq:s-skeleton-explicit}
 s_t=s_0\exp\left\{\int_0^t\left[A(p_r)+\eta_1(p_r)u_r^1+\eta_2(p_r)u_r^2\right]\dd r\right\}.
\end{equation}
Since $A(p)$, $\eta_1(p)$ and $\eta_2(p)$ are bounded on $[0,1]$, the $L^2$ bound on $u_t$ implies the bounds $m_N\le s_t\le M_N$.
\end{proof}

Let $\cS^N(\R^2)$ denote the closed ball
\[
 \cS^N(\R^2)=\left\{u\in L^2([0,T];\R^2):\int_0^T|u_t|^2\dd t\le N\right\},
\]
equipped with the weak topology inherited from $L^2([0,T];\R^2)$.  This topology is compact and metrizable on $\cS^N(\R^2)$.  Denote by $g^0(\int_0^\cdot u_r\dd r)$ the solution $(p_t,s_t)$ of \eqref{eq:p-skeleton}--\eqref{eq:s-skeleton}.

\begin{lemma}[Compactness of skeleton level sets]\label{lem:compact-skeleton}
For every $N<\infty$, the set
\[
 \Gamma_N=\left\{g^0\left(\int_0^\cdot u_r\dd r\right):u\in\cS^N(\R^2)\right\}
\]
is compact in $C([0,T];(0,1)\times(0,\infty))$.
\end{lemma}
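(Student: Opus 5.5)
The plan is to show that the map $u\mapsto g^0(\int_0^\cdot u_r\dd r)$ is continuous from $\cS^N(\R^2)$, with its weak topology, into $C([0,T];(0,1)\times(0,\infty))$ with the uniform topology. Since $\cS^N(\R^2)$ is compact and metrizable, $\Gamma_N$ is then the continuous image of a compact set, hence compact. Because the weak topology on $\cS^N(\R^2)$ is metrizable, it suffices to prove sequential continuity. Take $u^n\rightharpoonup u$ weakly in $L^2$ with all $u^n\in\cS^N(\R^2)$, and let $(p^n,s^n)$ and $(p,s)$ be the corresponding skeleton solutions. By Lemma \ref{lem:skeleton-wellposed}, all these paths take values in the fixed compact set $K_N=[\delta_N,1-\delta_N]\times[m_N,M_N]$. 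Consequently, every limit we obtain stays inside the open target space $(0,1)\times(0,\infty)$, and on $K_N$ the functions $b_0,\alpha_i,A,\eta_i$ are Lipschitz and bounded.

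\textbf{Step 1: equicontinuity.} From \eqref{eq:p-skeleton} we get $|p^n_t-p^n_r|\le C\int_r^t(1+|u^n_v|)\dd v\le C(|t-r|+\sqrt{N}|t-r|^{1/2})$. The same bound holds for $s^n$, using \eqref{eq:s-skeleton} together with $s^n\le M_N$. By Arzel\`a--Ascoli, every subsequence therefore has a further subsequence converging uniformly to some $(\bar p,\bar s)$ with values in $K_N$.

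\textbf{Step 2: identification of the limit.} This is the main step, because the controls converge only weakly. Write
\[
p^n_t-p_0-\int_0^t b_0(p^n_r)\dd r=\sum_i\int_0^t\alpha_i(\bar p_r)u^{n,i}_r\dd r+\sum_i\int_0^t\big(\alpha_i(p^n_r)-\alpha_i(\bar p_r)\big)u^{n,i}_r\dd r .
\]
For the first sum, fix $t$ and note that $\mathbf 1_{[0,t]}\alpha_i(\bar p)\in L^2$. Weak convergence then sends this term to $\int_0^t\alpha_i(\bar p_r)u^i_r\dd r$. The second sum is bounded by $\|\alpha_i(p^n)-\alpha_i(\bar p)\|_\infty\sqrt{TN}$, which tends to $0$ because $\alpha_i$ is Lipschitz on $[\delta_N,1-\delta_N]$ and $p^n\to\bar p$ uniformly. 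The drift term converges by uniform convergence. Hence $\bar p$ solves \eqref{eq:p-skeleton} with control $u$. The same argument applies to $s$ through \eqref{eq:s-skeleton}, or more simply to the exponent in \eqref{eq:s-skeleton-explicit}.

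\textbf{Step 3: conclusion.} By the uniqueness part of Lemma \ref{lem:skeleton-wellposed}, $(\bar p,\bar s)=(p,s)$. Since every subsequence has a further subsequence converging to the same limit, the full sequence converges uniformly, which gives the continuity claimed at the start. In particular, any sequence in $\Gamma_N$ has a subsequence converging to an element of $\Gamma_N$, so $\Gamma_N$ is compact.
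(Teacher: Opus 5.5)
Your proof is correct and follows essentially the same route as the paper: confinement to $[\delta_N,1-\delta_N]\times[m_N,M_N]$ via Lemma \ref{lem:skeleton-wellposed}, then equicontinuity and Arzel\`a--Ascoli, then identification of limits by splitting $\int F(p^n)u^n$ into a uniformly small part and a weakly convergent part, and finally uniqueness of the skeleton. Phrasing it as sequential continuity of $u\mapsto g^0(\int_0^\cdot u_r\dd r)$ on the compact metrizable ball $\cS^N(\R^2)$ is, if anything, slightly cleaner than the paper's ``relatively compact plus closed'' formulation, because it makes explicit the extraction of a weakly convergent subsequence of controls that the paper's closedness step leaves implicit.
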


\begin{proof}
By Lemma \ref{lem:skeleton-wellposed}, the family is uniformly bounded and stays in the compact set $[\delta_N,1-\delta_N]\times[m_N,M_N]$.  Moreover,
\[
 |p_t-p_r|\le C|t-r|+C\int_r^t|u_a|\dd a
 \le C|t-r|+C\sqrt{N}|t-r|^{1/2},
\]
and the same estimate for $\log s$ follows from \eqref{eq:s-skeleton-explicit}.  Hence the family is equicontinuous, and Arzela-Ascoli's criteria gives relative compactness.

It remains to check closedness.  Let $u_n\in\cS^N(\R^2)$ converge weakly in $L^2$ to $u$, and let $(p_n,s_n)=g^0(\int_0^\cdot u_{n,r}\dd r)$ converge uniformly along a subsequence to $(p,s)$.  For any continuous coefficient $F$ and any component $i$,
\[
 \int_0^t F(p_{n,r})u_{n,r}^i\dd r-\int_0^tF(p_r)u_r^i\dd r
 =\int_0^t\{F(p_{n,r})-F(p_r)\}u_{n,r}^i\dd r
 +\int_0^tF(p_r)(u_{n,r}^i-u_r^i)\dd r.
\]
The first term tends to zero by uniform convergence and the uniform $L^2$ bound, the second tends to zero by weak convergence in $L^2$.

Since $(p_n,s_n)$ is the skeleton solution associated with $u_n$, it satisfies
the integral equations corresponding to \eqref{eq:p-skeleton}--\eqref{eq:s-skeleton}
with $(p,s,u)$ replaced by $(p_n,s_n,u_n)$. Passing to the limit in these
identities shows that $(p,s)$ satisfies \eqref{eq:p-skeleton}--\eqref{eq:s-skeleton}
with control $u$. By uniqueness of the skeleton solution,
\[
(p,s)=g^0\left(\int_0^\cdot u_r\,\dd r\right).
\]
Therefore, $\Gamma_N$ is closed and hence compact.

\end{proof}

To place \eqref{eq:Peps-correct}--\eqref{eq:Seps-correct} in the abstract
weak-convergence framework of Appendix \ref{app:bdm}, we first introduce the associated
solution map. Set
$$
\mathcal W:=C_0([0,T];\mathbb R^2),
\quad
\mathcal E:=C([0,T];(0,1)\times(0,\infty)).
$$
By the strong well-posedness established in Section \ref{sec:strong-uniqueness}, applied with
$\gamma_i$ replaced by $\varepsilon\gamma_i$. For every $\varepsilon>0$
there exists a Borel measurable solution map
$$
g^\varepsilon:\mathcal W\longrightarrow\mathcal E
$$
such that
$$
(P^\varepsilon,S^\varepsilon)
=
g^\varepsilon\bigl(\sqrt{\varepsilon}\,W\bigr)
\quad\text{a.s.}
$$
We fix one such measurable version throughout this section. Thus,
$g^\varepsilon$ maps the scaled driving-noise path to the corresponding
solution path of
\eqref{eq:Peps-correct}--\eqref{eq:Seps-correct}.

Let $\cP_2^N(\R^2)$ be the collection of predictable processes $k$ with $k(\omega)\in\cS^N(\R^2)$ a.s.. 
For $u\in\mathcal P_2^N(\mathbb R^2)$, define
$$
(P^{\varepsilon,u},S^{\varepsilon,u})
:=
g^\varepsilon\left(
\sqrt{\varepsilon}\,W+\int_0^\cdot u_r\,\dd r
\right).
$$
To identify the dynamics of this process, set
$$
\widetilde W_t
:=
W_t+\frac{1}{\sqrt{\varepsilon}}
\int_0^t u_r\,\dd r.
$$
The energy bound on $u$ implies Novikov's condition for every fixed
$\varepsilon>0$. Hence, by Girsanov's theorem, there exists a probability
measure equivalent to the original one under which $\widetilde W$ is a
two-dimensional Brownian motion. Under this measure,
$$
(P^{\varepsilon,u},S^{\varepsilon,u})
=
g^\varepsilon\bigl(\sqrt{\varepsilon}\,\widetilde W\bigr)
$$
satisfies \eqref{eq:Peps-correct}--\eqref{eq:Seps-correct} with $W$
replaced by $\widetilde W$.

 Therefore, $(P^{\eps,u},S^{\eps,u})$  satisfies the following controlled system:
\begin{equation}\label{eq:controlled-Peps}
\begin{aligned}
 \dd P_t^{\eps,u}={}&\{b_0(P_t^{\eps,u})+\eps b_1(P_t^{\eps,u})
 +\alpha_1(P_t^{\eps,u})u_t^1+\alpha_2(P_t^{\eps,u})u_t^2\}\dd t \\
 &\quad +\sqrt{\eps}\{\alpha_1(P_t^{\eps,u})\dd W_t^1+\alpha_2(P_t^{\eps,u})\dd W_t^2\},
\end{aligned}
\end{equation}
\begin{equation}\label{eq:controlled-Seps}
\begin{aligned}
 \dd S_t^{\eps,u}={}&S_t^{\eps,u}\{A(P_t^{\eps,u})+
 \eta_1(P_t^{\eps,u})u_t^1+\eta_2(P_t^{\eps,u})u_t^2\}\dd t \\
 &\quad +\sqrt{\eps} S_t^{\eps,u}\{\eta_1(P_t^{\eps,u})\dd W_t^1+\eta_2(P_t^{\eps,u})\dd W_t^2\}.
\end{aligned}
\end{equation}

\begin{lemma}[Convergence of controlled processes]\label{lem:controlled-convergence}
Let $N<\infty$ and let $u^\eps\in\cP_2^N(\R^2)$ converge in distribution, as $\cS^N(\R^2)$-valued random variables with the weak topology, to $u$.  Then
\[
 g^\eps\left(\sqrt{\eps} W+\int_0^\cdot u_r^\eps\dd r\right)
 \Longrightarrow
 g^0\left(\int_0^\cdot u_r\dd r\right)
\]
in $C([0,T];[0,1]\times(0,\infty))$.
\end{lemma}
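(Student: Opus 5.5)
The plan is the standard tightness–identification–uniqueness argument, carried out for the pair $(P^{\eps,u^\eps},S^{\eps,u^\eps})$ of \eqref{eq:controlled-Peps}--\eqref{eq:controlled-Seps}. We write $P^\eps=P^{\eps,u^\eps}$ and $S^\eps=S^{\eps,u^\eps}$.

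\textbf{Step 1: tightness of $P^\eps$.} Since $P^\eps$ takes values in $[0,1]$, all coefficients $b_0,b_1,\alpha_i,\eta_i,A$ are bounded by a constant $C$. For $r<t$, Cauchy--Schwarz gives
\[
 |P^\eps_t-P^\eps_r|\le C|t-r|+C\sqrt N|t-r|^{1/2}+\sqrt\eps\,|M^\eps_t-M^\eps_r|,
\]
where $M^\eps$ is a martingale with bounded quadratic variation density. The Burkholder--Davis--Gundy inequality together with Kolmogorov's criterion (or Aldous' criterion) then shows that $P^\eps$ is tight in $C([0,T];[0,1])$. In addition, $\sqrt\eps\sup_t|M^\eps_t|\to0$ in $L^2$.

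\textbf{Step 2: tightness of $S^\eps$.} We work with $\log S^\eps$. By It\^o's formula,
\[
 \log S^\eps_t=\log s_0+\int_0^t\Bigl[A+\eta_1u^{\eps,1}+\eta_2u^{\eps,2}-\tfrac\eps2(\eta_1^2+\eta_2^2)\Bigr](P^\eps_r)\dd r+\sqrt\eps\int_0^t\eta(P^\eps_r)\cdot\dd W_r.
\]
The same estimates as in Step 1 show that $\log S^\eps$ is tight in $C([0,T];\R)$ and uniformly bounded in probability. Consequently $S^\eps=\exp(\log S^\eps)$ is tight in $C([0,T];(0,\infty))$, and it stays in a compact subset of $(0,\infty)$ with high probability.

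\textbf{Step 3: identification of limits.} The space $\cS^N(\R^2)$ is compact metrizable, so the triple $(u^\eps,P^\eps,\log S^\eps)$ is tight. By Skorokhod's representation theorem we may take an a.s.\ convergent subsequence, with limit $(u,p,\ell)$.

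We pass to the limit in the integral form of \eqref{eq:controlled-Peps}:
\begin{itemize}[label={}]
\item The terms $\eps b_1$ and the stochastic integral vanish by Step 1.
\item The term $\int_0^t b_0(P^\eps_r)\dd r$ converges because $b_0$ is continuous and $P^\eps\to p$ uniformly.
\item The control term is handled exactly as in the closedness part of Lemma \ref{lem:compact-skeleton}. We split $\int_0^t\alpha_i(P^\eps_r)u^{\eps,i}_r\dd r-\int_0^t\alpha_i(p_r)u^i_r\dd r$ into $\int_0^t\{\alpha_i(P^\eps_r)-\alpha_i(p_r)\}u^{\eps,i}_r\dd r$ and $\int_0^t\alpha_i(p_r)(u^{\eps,i}_r-u^i_r)\dd r$. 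The first piece is bounded by $\sqrt N\,\|\alpha_i(P^\eps)-\alpha_i(p)\|_{L^2}\to0$, using that $\alpha_i$ is uniformly continuous on $[0,1]$. The second piece tends to $0$ by weak convergence. This convergence is uniform in $t$, since weak convergence in $L^2$ gives uniform convergence of the primitives on bounded sets.
\end{itemize}
Hence $p$ solves \eqref{eq:p-skeleton} with control $u$. The same argument applied to $\log S^\eps$ shows that $s=e^{\ell}$ satisfies \eqref{eq:s-skeleton-explicit}.

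\textbf{Step 4: uniqueness of the limit.} This is the main obstacle. A priori the limit $p$ takes values in $[0,1]$, whereas the uniqueness in Lemma \ref{lem:skeleton-wellposed} was proved for solutions in $(0,1)$. Boundary separation is therefore needed for an arbitrary $[0,1]$-valued solution. We argue as follows:
\begin{itemize}[label={}]
\item Let $\sigma=\inf\{t:p_t\in\{0,1\}\}$. On $[0,\sigma)$ the computation leading to \eqref{eq:log-p-lower} and \eqref{eq:log-one-minus-p-lower} applies verbatim, so $\log p_t$ and $\log(1-p_t)$ are bounded below by $-C(1+N)-C$. Hence $\sigma>T$, i.e.\ $p$ stays in $[\delta_N,1-\delta_N]$.
\item Alternatively, one can note that at $p=0$ the drift satisfies $b_0(0)=q_{12}>0$ and $\alpha_i(0)=0$, and use the same Riccati-type lower bound to exclude touching the boundary.
\end{itemize}
Once $p$ lies in $[\delta_N,1-\delta_N]$, the Gronwall uniqueness from Lemma \ref{lem:skeleton-wellposed} identifies $(p,s)=g^0(\int_0^\cdot u_r\dd r)$.

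\textbf{Conclusion.} Every subsequence has a further subsequence converging in distribution to $g^0(\int_0^\cdot u_r\dd r)$. The map $u\mapsto g^0(\int_0^\cdot u_r\dd r)$ is continuous on $\cS^N(\R^2)$ by the closedness argument of Lemma \ref{lem:compact-skeleton}, so this limit law is determined by the law of $u$, and the whole family converges.
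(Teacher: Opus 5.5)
Your proposal is correct and follows essentially the same route as the paper: tightness of $P^\eps$ and of $\log S^\eps$ from the bounded coefficients, the $L^2$ energy bound and Burkholder--Davis--Gundy; identification of subsequential limits through the splitting used in the closedness part of Lemma~\ref{lem:compact-skeleton}; and uniqueness of the skeleton solution from Lemma~\ref{lem:skeleton-wellposed}. The paper's proof uses two points implicitly that you make explicit, namely the Skorokhod coupling of $(u^\eps,P^\eps,\log S^\eps)$ and the check that a $[0,1]$-valued limit cannot reach $\{0,1\}$ before time $T$, and you handle both soundly.
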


\begin{proof}
The coefficients $b_0$, $b_1$, $\alpha_i$ and $\eta_i$ are bounded on $[0,1]$.  From \eqref{eq:controlled-Peps}, for $0\le r<t\le T$,
\[
 |P_t^{\eps,u^\eps}-P_r^{\eps,u^\eps}|
 \le C|t-r|+C\int_r^t|u_a^\eps|\dd a
 +\sqrt{\eps}\left|\int_r^t\alpha_1(P_a^{\eps,u^\eps})\dd W_a^1+\int_r^t\alpha_2(P_a^{\eps,u^\eps})\dd W_a^2\right|.
\]
The deterministic terms are uniformly controlled by
$C|t-r|+C\sqrt N |t-r|^{1/2}$, and the martingale term vanishes in probability uniformly on $[0,T]$ as $\eps\to0$ by the Burkholder--Davis--Gundy inequality.  Thus $\{P^{\eps,u^\eps}\}$ is tight.

For $S^{\eps,u^\eps}$, use the logarithmic form
\begin{equation}\label{eq:log-S-controlled}
	\begin{aligned}
		\log S_t^{\eps,u^\eps}
		={}&\log s_0+
		\int_0^t\left[
		A(P_a^{\eps,u^\eps})
		+\eta_1(P_a^{\eps,u^\eps})u_a^{\eps,1}
		+\eta_2(P_a^{\eps,u^\eps})u_a^{\eps,2}
		\right]\dd a \\
		&-\frac{\eps}{2}\int_0^t\left[
		\eta_1(P_a^{\eps,u^\eps})^2
		+\eta_2(P_a^{\eps,u^\eps})^2
		\right]\dd a \\
		&+\sqrt{\eps}\int_0^t\left[
		\eta_1(P_a^{\eps,u^\eps})\dd W_a^1
		+\eta_2(P_a^{\eps,u^\eps})\dd W_a^2
		\right].
	\end{aligned}
\end{equation}
Again, boundedness of $A$ and $\eta_i$, the $L^2$ bound on $u^\eps$, and BDG imply tightness of $\log S^{\eps,u^\eps}$, and hence of $S^{\eps,u^\eps}$.

Let $(\bar p,\bar s)$ denote the limit of a convergent subsequence.  The martingale terms in \eqref{eq:controlled-Peps} and \eqref{eq:log-S-controlled} vanish in probability because their quadratic variations are of order $O(\eps)$.  Since $P^{\eps,u^\eps}\to\bar p$ uniformly along the subsequence and $u^\eps\to u$ weakly in $L^2$, the same weak-convergence argument used in the proof of Lemma \ref{lem:compact-skeleton} gives, for each continuous coefficient $F$,
\[
 \int_0^tF(P_a^{\eps,u^\eps})u_a^{\eps,i}\dd a
 \longrightarrow
 \int_0^tF(\bar p_a)u_a^i\dd a.
\]
Passing to the limit in the integral equations shows that $(\bar p,\bar s)$ satisfies \eqref{eq:p-skeleton}--\eqref{eq:s-skeleton} with control $u$.  By Lemma \ref{lem:skeleton-wellposed}, the skeleton solution is unique, so $(\bar p,\bar s)=g^0(\int_0^\cdot u_r\dd r)$.  Since every subsequential limit is the same, the full convergence follows.
\end{proof}

\begin{proof}[Proof of Theorem \ref{thm:ldp}]
Lemmas \ref{lem:compact-skeleton} and \ref{lem:controlled-convergence} verify Assumption \ref{assumption:ti} in Appendix \ref{app:bdm} for the family $(P^\eps,S^\eps)$.  Therefore Theorem \ref{thm:bdm} gives an LDP for $(P^\eps,S^\eps)$ in $C([0,T];(0,1)\times(0,\infty))$ with rate function
\begin{equation}\label{eq:ps-rate}
 J(p,s)=\inf\left\{\frac12\int_0^T|u_t|^2\dd t:
 (p,s)\text{ solves \eqref{eq:p-skeleton}--\eqref{eq:s-skeleton} with control }u\right\}.
\end{equation}
The map
\[
 \Phi:C([0,T];(0,1)\times(0,\infty))\to C([0,T];\Xo),
 \quad
 \Phi(p,s)=(ps,(1-p)s),
\]
is continuous.  Hence the contraction principle gives an LDP for $X^\eps=\Phi(P^\eps,S^\eps)$.

It remains only to identify the rate function.  If $x=\Phi(p,s)$ and $(p,s)$ solves \eqref{eq:p-skeleton}--\eqref{eq:s-skeleton} with control $u$, then direct differentiation gives
\begin{equation}\label{eq:x-skeleton}
 \dot x_t^k=\sum_{j=1}^2q_{kj}x_t^j+\sqrt{\gamma_kx_t^1x_t^2}\,u_t^k,
 \quad k=1,2.
\end{equation}
Conversely, if $x\in AC([0,T];\Xo)$ satisfies \eqref{eq:x-skeleton}, then $p=x^1/(x^1+x^2)$ and $s=x^1+x^2$ solve \eqref{eq:p-skeleton}--\eqref{eq:s-skeleton}.  Thus the control is uniquely determined by
\[
 u_t^k=\frac{\dot x_t^k-\sum_{j=1}^2q_{kj}x_t^j}
 {\sqrt{\gamma_kx_t^1x_t^2}},
 \quad k=1,2,
\]
which yields \eqref{eq:rate-function}.  The compactness already proved in Lemma \ref{lem:compact-skeleton} shows that this rate function is good.
\end{proof}

\appendix
\section{A large deviation framework}
\label{app:bdm}

For the convenience of the reader, we recall the definition of the LDP and a weak-convergence criterion due to Budhiraja, Dupuis and Maroulas \cite{BDM}.

\begin{definition}[Large deviation principle]\label{def:ldp}
Let $E$ be a Polish space.  A family $\{u^\eps:\eps>0\}$ of $E$-valued random variables is said to satisfy the LDP as $\eps\to0$ with rate function $I:E\to[0,\infty]$ if $I$ is lower semicontinuous, the level set $\{x\in E:I(x)\le c\}$ is compact for every $c<\infty$, and
\[
 \liminf_{\eps\to0}\eps\log\Prob(u^\eps\in G)\ge -\inf_{x\in G}I(x)
\]
for every open set $G\subset E$, while
\[
 \limsup_{\eps\to0}\eps\log\Prob(u^\eps\in F)\le -\inf_{x\in F}I(x)
\]
for every closed set $F\subset E$.
\end{definition}

Let $B$ be a $d$-dimensional Brownian motion and let $\mathbb S=C([0,T];\R^d)$.  For each $\eps>0$, let $g^\eps:\mathbb S\to E$ be measurable and define
\[
 u^\eps=g^\eps(\sqrt{\eps} B).
\]
For $N<\infty$ set
\[
 \cS^N(\R^d)=\left\{k\in L^2([0,T];\R^d):\int_0^T|k_s|^2\dd s\le N\right\},
\]
equipped with the weak topology inherited from $L^2([0,T];\R^d)$.  Let $\cP_2^N(\R^d)$ be the collection of predictable processes $k$ with $k(\omega)\in\cS^N(\R^d)$ a.s..

\begin{assumption}\label{assumption:ti}
There exists a measurable map $g^0:\mathbb S\to E$ such that:
\begin{enumerate}[label=\textup{(\roman*)}]
\item for every $N<\infty$,
\[
 \Gamma_N=\left\{g^0\left(\int_0^\cdot k_s\dd s\right):k\in\cS^N(\R^d)\right\}
\]
is a compact subset of $E$;
\item if $k^\eps\in\cP_2^N(\R^d)$ converges in distribution, as an $\cS^N(\R^d)$-valued random variable with the weak topology, to $k$, then
\[
 g^\eps\left(\sqrt{\eps} B+\int_0^\cdot k_s^\eps\dd s\right)
 \rightarrow
 g^0\left(\int_0^\cdot k_s\dd s\right)
\]
in distribution as $\eps\to0$.
\end{enumerate}
\end{assumption}

\begin{theorem}[Budhiraja--Dupuis--Maroulas]\label{thm:bdm}
Suppose that Assumption \ref{assumption:ti} holds.  Then $\{u^\eps\}$ satisfies the LDP on $E$ with good rate function
\[
 I(f)=\inf\left\{\frac12\int_0^T|k_s|^2\dd s:
 k\in L^2([0,T];\R^d),\quad
 f=g^0\left(\int_0^\cdot k_s\dd s\right)\right\},
\]
where the infimum over the empty set is understood to be $\infty$.
\end{theorem}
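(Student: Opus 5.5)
The proof I would give follows the weak-convergence (variational) route of Budhiraja--Dupuis, since Theorem \ref{thm:bdm} is a recalled result from \cite{BDM}. The starting point is the Bou\'e--Dupuis variational representation: for every bounded measurable $h:\mathbb S\to\R$,
\[
 -\eps\log\E\exp\{-h(\sqrt\eps B)/\eps\}
 =\inf_{k\in\cP_2}\E\Big[\tfrac12\int_0^T|k_s|^2\dd s+h\Big(\sqrt\eps B+\int_0^\cdot k_s\dd s\Big)\Big],
\]
where $\cP_2$ denotes predictable processes with finite energy a.s. By Bryc's theorem (or the Laplace principle equivalence on Polish spaces), it suffices to show three things. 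First, $I$ is a good rate function. Second, for bounded continuous $F:E\to\R$ we have
\[
 \lim_{\eps\to0}-\eps\log\E e^{-F(u^\eps)/\eps}=\inf_f\{I(f)+F(f)\}.
\]
Applying the representation with $h=F\circ g^\eps$ turns the left side into a stochastic control problem for $g^\eps(\sqrt\eps B+\int_0^\cdot k\,\dd s)$.

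Goodness of $I$ comes from Assumption \ref{assumption:ti}(i). The level set $\{I\le c\}$ equals $\bigcap_{n}\Gamma_{2c+1/n}$, because the infimum defining $I$ is attained: on a weakly compact ball, take a minimizing sequence $k_n$ and a weak limit $k$. The identity $f=g^0(\int k)$ then persists by closedness of the graph, which is part of the compactness of $\Gamma_N$, applied along that sequence. So each level set is an intersection of compact sets and is therefore compact. Lower semicontinuity follows from the same argument.

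For the Laplace upper bound, i.e.\ the lower bound on $-\eps\log\E$, choose $\delta$-optimal controls $k^\eps$. Because $F$ is bounded, we may localize to $k^\eps\in\cP_2^N$ with $N=4\|F\|_\infty/\delta$, at the cost of $\delta$; this truncation via stopping when $\int_0^t|k|^2$ reaches $N$ is the only technical point. Weak compactness of $\cS^N$ gives tightness, so along a subsequence $k^\eps\Rightarrow k$. Assumption \ref{assumption:ti}(ii) together with Fatou's lemma (the energy is weakly lower semicontinuous) then gives
\[
 \liminf\ge\E\big[\tfrac12\|k\|^2+F(g^0(\textstyle\int k))\big]-\delta\ge\inf_f\{I+F\}-\delta.
\]
For the reverse inequality, fix $f$ with $I(f)+F(f)$ near optimal and a deterministic $k$ with $f=g^0(\int k)$ and $\tfrac12\|k\|^2\le I(f)+\delta$. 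Plugging $k^\eps\equiv k$ into the representation and applying (ii) with bounded convergence gives the limsup bound.

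The main obstacle is the localization step in the upper bound: I must justify replacing near-optimal controls by ones with uniformly bounded energy, and pass the limit through the expectation with only convergence in distribution. I would handle this through the Skorokhod representation combined with the bounded/Fatou argument above.
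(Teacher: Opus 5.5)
Your argument is the standard Budhiraja--Dupuis proof: the Bou\'e--Dupuis representation, the Laplace-principle/LDP equivalence, truncation of near-optimal controls for one bound, and a fixed deterministic control for the other. That is exactly the proof in \cite{BDM}, which the paper recalls without reproducing, so your proposal is correct in outline. The only flaw is inessential: the identity $\{I\le c\}=\bigcap_n\Gamma_{2c+1/n}$ follows directly from the definition of the infimum, so attainment is not needed. Your justification of attainment (``closedness of the graph, which is part of the compactness of $\Gamma_N$'') is also invalid, because compactness of the image set $\Gamma_N$ says nothing about continuity of $k\mapsto g^0(\int_0^\cdot k_s\dd s)$; if you want that continuity, extract it from Assumption \ref{assumption:ti}(ii) by a diagonal argument over deterministic controls.
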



\begin{thebibliography}{99}

\bibitem{ABBP} Athreya, S. R., Barlow, M. T., Bass, R. F. and Perkins, E. A. (2002). Degenerate stochastic differential equations and super-Markov chains. \emph{Probab. Theory Related Fields} \textbf{123}, 484--520.

\bibitem{BC} Baldi, P. and Caramellino, L. (2011). General Freidlin-Wentzell large deviations and positive diffusions. \emph{Statist. Probab. Lett.} \textbf{81}, no. 8, 1218--1229.

\bibitem{BP} Bass, R. F. and Perkins, E. A. (2003). Degenerate stochastic differential equations with H\"older continuous coefficients and super-Markov chains. \emph{Trans. Amer. Math. Soc.} \textbf{355}, no. 1, 373--405.

\bibitem{BP2} Bass, R. F. and Perkins, E. A. (2004). Countable systems of degenerate stochastic differential equations with applications to super-Markov chains. \emph{Electron. J. Probab.} \textbf{9}, 634--673.

\bibitem{BDM} Budhiraja, A., Dupuis, P. and Maroulas, V. (2008). Large deviations for infinite dimensional stochastic dynamical systems. \emph{Ann. Probab.} \textbf{36}, no. 4, 1390--1420.

\bibitem{BMP} Burdzy, K., Mueller, C. and Perkins, E. A. (2010). Nonuniqueness for nonnegative solutions of parabolic stochastic partial differential equations. \emph{Illinois J. Math.} \textbf{54}, no. 4, 1481--1507.

\bibitem{Daw} Dawson, D. (1975). Stochastic evolution equations and related measure processes. \emph{J. Multivariate Anal.} \textbf{5}, 1--52.

\bibitem{DF} Dawson, D. and Fleischmann, K. (1997). A continuous super-Brownian motion in a super-Brownian medium. \emph{J. Theoret. Probab.} \textbf{10}, no. 1, 213--276.

\bibitem{DFX} Dawson, D. A., Fleischmann, K. and Xiong, J. (2005). Strong uniqueness for cyclically symbiotic branching diffusions. \emph{Statist. Probab. Lett.} \textbf{73}, 251--257.

\bibitem{DP} Dawson, D. A. and Perkins, E. A. (1998). Long-time behavior and coexistence in a mutually catalytic branching model. \emph{Ann. Probab.} \textbf{26}, no. 3, 1088--1138.

\bibitem{DP2} Dawson, D. A. and Perkins, E. A. (2006). On the uniqueness problem for catalytic branching networks and other singular diffusions. \emph{Illinois J. Math.} \textbf{50}, no. 1--4, 323--383.

\bibitem{Deb} DeBlassie, D. (2004). Uniqueness for diffusions degenerating at the boundary of a smooth bounded set. \emph{Ann. Probab.} \textbf{32}, no. 4, 3167--3190.




\bibitem{FX} Fleischmann, K. and Xiong, J. (2001). A cyclically catalytic super-Brownian motion. \emph{Ann. Probab.} \textbf{29}, no. 2, 820--861.

\bibitem{FreidlinWentzell}
Freidlin, M. I. and Wentzell, A. D. (2012).
Random perturbations of dynamical systems.
3rd ed., Grundlehren der Mathematischen Wissenschaften, Vol. 260,
Springer, Berlin, Heidelberg.

\bibitem{He} He, H. (2009). Strong uniqueness for a class of singular SDEs for catalytic branching diffusions. \emph{Statist. Probab. Lett.} \textbf{79}, no. 2, 182--187.

\bibitem{iw} Ikeda, N. and Watanabe, S. (1977). A comparison theorem for solutions of stochastic differential equations and its applications. \emph{Osaka J. Math.} \textbf{14}, no. 3, 619--633.

\bibitem{KS} Konno, N. and Shiga, T. (1988). Stochastic partial differential equations for some measure-valued diffusions. \emph{Probab. Theory Related Fields} \textbf{79}, 201--225.

\bibitem{MMP} Mueller, C., Mytnik, L. and Perkins, E. (2014). Nonuniqueness for a parabolic SPDE with $3/4-\varepsilon$-H\"older diffusion coefficients. \emph{Ann. Probab.} \textbf{42}, no. 5, 2032--2112.

\bibitem{Myt} Mytnik, L. (1998). Uniqueness for a mutually catalytic branching model. \emph{Probab. Theory Related Fields} \textbf{112}, no. 2, 245--253.

\bibitem{MP} Mytnik, L. and Perkins, E. A. (2011). Pathwise uniqueness for stochastic heat equations with H\"older continuous coefficients: the white noise case. \emph{Probab. Theory Related Fields} \textbf{149}, no. 1--2, 1--96.

\bibitem{MPS} Mytnik, L., Perkins, E. A. and Sturm, A. (2006). On pathwise uniqueness for stochastic heat equations with non-Lipschitz coefficients. \emph{Ann. Probab.} \textbf{34}, no. 5, 1910--1959.

\bibitem{Rei} Reimers, M. (1989). One-dimensional stochastic differential equations and the branching measure diffusion. \emph{Probab. Theory Related Fields} \textbf{81}, 319--340.

\bibitem{Swa2} Swart, J. M. (2002). Pathwise uniqueness for an SDE with non-Lipschitz coefficients. \emph{Stochastic Process. Appl.} \textbf{98}, no. 1, 131--149.

\bibitem{Wat} Watanabe, S. (1968). A limit theorem of branching processes and continuous state branching processes. \emph{J. Math. Kyoto Univ.} \textbf{8}, 141--167.

\bibitem{Xio} Xiong, J. (2013). Super-Brownian motion as the unique strong solution to an SPDE. \emph{Ann. Probab.} \textbf{41}, no. 2, 1030--1054.

\end{thebibliography}
\end{document}